\newtheorem{Lem}{Lemma}
\newtheorem{theorem}{Theorem}
\numberwithin{equation}{section}
\title{A new upper bound for $|\zeta(1+ it)|$}
\author{Timothy Trudgian\footnote{Supported by ARC Grant DE120100173.}\\
Mathematical Sciences Institute\\ The Australian National University,
 ACT 0200, Australia\\ timothy.trudgian@anu.edu.au
}
\begin{document}
\maketitle
\section*{Abstract}
It is known that $|\zeta(1+ it)|\ll (\log t)^{2/3}$. This paper provides a new explicit estimate, viz.\ $|\zeta(1+ it)|\leq \frac{3}{4} \log t$, for $t\geq 3$. This gives the best upper bound on $|\zeta(1+ it)|$ for $t\leq 10^{2\cdot 10^{5}}$.

\section{Introduction}
Mellin \cite{Mellin} (see also \cite[Thm 3.5]{Titchmarsh}) was the first to show that 
\begin{equation}\label{Land}
\zeta(1+ it) \ll \log t.
\end{equation} 
This was improved by Littlewood (see, e.g.,\ \cite[Thm 5.16]{Titchmarsh}) to
\begin{equation}\label{Lit}
\zeta(1+ it)\ll \frac{\log t}{\log\log t}.
\end{equation} 
This was improved by several authors; the best known\footnote{As usual, the Riemann hypothesis gives a stronger result, viz., $\zeta(1+ it) \ll \log\log t$ (see, e.g.,\ \cite[\S 14.18]{Titchmarsh}).} result (see, e.g.\ \cite[(6.19.2)]{Titchmarsh}) is
\begin{equation}\label{Vin}
\zeta(1+ it)\ll (\log t)^{2/3}.
\end{equation}

Insofar as explicit results are concerned, Backlund \cite{Backlund}  made (\ref{Land}) explicit by proving that 
\begin{equation}\label{Back}
|\zeta(1+ it)|\leq \log t,
\end{equation}
for $t\geq 50$. Ford \cite{Ford} has made (\ref{Vin}) explicit by proving that 
\begin{equation}\label{Ford}
|\zeta(1+ it)| \leq 72.6 (\log t)^{2/3},
\end{equation}
for $t\geq 3$. Ford's result is actually much more general: he obtains excellent bounds for $|\zeta(\sigma + it)|$ where $\sigma$ is near $1$. Should one be interested in a bound only on $\sigma =1$, one can improve on (\ref{Ford}) to show\footnote{The integral inequality on \cite[p.\ 622]{Ford}, originally verified for $y\geq 0,$ can now be evaluated at $y=0$ only.} that $|\zeta(1+ it)|\leq 62.6 (\log t)^{2/3}$. Note that this improves on (\ref{Back}) when $t\geq 10^{10^{5}}$. Without a complete overhaul of Ford's paper it seems unlikely that his methods could furnish a bound superior to (\ref{Back}) when $t$ is at all modest, say $t\leq 10^{100}$.

To the knowledge of the author there is no explicit bound of the form (\ref{Lit}). One could follow the arguments of \cite[\S 5.16]{Titchmarsh} to produce such a bound, though this leads to a result that only improves on (\ref{Back}) when $t$ is astronomically large. However one can still use the ideas in \cite[\S 5.16]{Titchmarsh} to reprove (\ref{Land}). Indeed if one were lucky, as the author was, one may even be able to supersede (\ref{Back}). This fortune is summarised in the following theorem.
\begin{theorem}\label{onlyt}
\begin{equation*}
|\zeta(1+ it)|\leq \tfrac{3}{4}\log t,
\end{equation*}
when $t\geq 3$.
\end{theorem}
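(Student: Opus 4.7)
The plan is to follow the classical Mellin--Backlund approach: write $\zeta(1+it)$ as a finite Dirichlet polynomial plus a controlled tail via Euler--Maclaurin, then sharpen the bound on the polynomial by splitting its range at some intermediate point and exploiting the oscillation of $n^{-it}$ in the upper range. Backlund's bound $|\zeta(1+it)|\le\log t$ corresponds, essentially, to the split point being placed at $N\approx t$ and the upper range being estimated only trivially by $|n^{-it}|=1$; to beat it we must extract genuine cancellation from the upper part of the sum.

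First, a first-order Euler--Maclaurin identity gives, for any positive integer $N$,
\begin{equation*}
\zeta(1+it) \;=\; \sum_{n=1}^{N}\frac{1}{n^{1+it}} + \frac{N^{-it}}{it} - \frac{1}{2N^{1+it}} - (1+it)\int_N^{\infty}\frac{\{x\}-1/2}{x^{2+it}}\,dx.
\end{equation*}
The three non-sum terms are bounded in modulus by $1/t$, $1/(2N)$, and $(1+t)/(2N)$ respectively, so with $N$ of order $t$ their total contribution is a bounded constant.

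Next, decompose $\sum_{n=1}^{N}n^{-1-it}=\sum_{n=1}^{N_{0}}+\sum_{N_{0}<n\le N}$ at a parameter $N_{0}$ to be chosen. The first piece is bounded trivially by $\log N_{0}+\gamma+O(1/N_{0})$. For the second piece, Abel summation reduces matters to bounding the exponential sum $S(u)=\sum_{N_{0}<n\le u}n^{-it}$; because the phase function $-t\log n/(2\pi)$ has the monotonic derivative $-t/(2\pi n)$, a first-derivative test of Kusmin--Landau type yields an explicit estimate on $|S(u)|$. A subsequent partial summation keeps the contribution of this piece at size $O(1)$ provided $N_{0}$ is chosen so that $t/(2\pi n)$ is safely below $1$ throughout $(N_{0},N]$.

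Assembling these estimates gives $|\zeta(1+it)|\le \log N_{0}+C$ for some explicit constant $C$, and taking $N_{0}$ to be a suitable power of $t$ (around $t^{3/4}$, trimmed to absorb $C$) delivers $|\zeta(1+it)|\le \tfrac{3}{4}\log t$ for $t$ beyond some threshold $t_{0}$. For $3\le t\le t_{0}$ the theorem is verified directly by evaluating $|\zeta(1+it)|$ on a sufficiently fine grid in $t$ and using a standard Lipschitz-type control of $\zeta$ to interpolate between grid nodes. The principal difficulty is the explicit constant tracking: everything needs to be tight enough to squeeze the final coefficient from $1$ down to $3/4$, rather than merely to $1-\epsilon$; a secondary issue is to keep $t_{0}$ small enough that the residual numerical verification remains computationally feasible.
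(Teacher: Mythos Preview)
Your proposal contains a genuine gap at the step where you invoke a ``first-derivative test of Kusmin--Landau type'' on the range $(N_{0},N]$.  The Kusmin--Landau inequality requires $\|f'(x)\|\ge\lambda>0$ (distance to the nearest integer) throughout the interval; with $f(x)=-\tfrac{t}{2\pi}\log x$ one has $|f'(n)|=t/(2\pi n)$, and you yourself note that the test is only available once $t/(2\pi n)$ is ``safely below $1$'', i.e.\ once $n>t/(2\pi)$.  But you then set $N_{0}\approx t^{3/4}$, which is incompatible: on $(t^{3/4},\,t/(2\pi)]$ the quantity $t/(2\pi n)$ runs from about $t^{1/4}/(2\pi)$ down to $1$, crossing $\asymp t^{1/4}$ integers, so $\|f'\|$ is repeatedly near $0$ and the first-derivative test yields nothing usable.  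With the condition you actually need, $N_{0}\ge t/(2\pi)$, the trivial piece already gives $\log N_{0}\ge \log t-\log(2\pi)$, and you are back to Backlund's coefficient $1$, not $3/4$.

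The paper's proof closes exactly this gap by replacing the first-derivative test with an explicit \emph{second}-derivative (van der Corput) estimate due to Cheng and Graham, applied on dyadic blocks $(a,2a]$ with $A_{1}\sqrt{t}<a\le t/m$.  Two further differences from your outline matter.  First, the split is at $N_{0}=A_{1}\sqrt{t}$ (not $t^{3/4}$), so the trivially estimated part contributes $\tfrac12\log t+O(1)$.  Second, the upper range is \emph{not} $O(1)$: each of the $\asymp\tfrac{\log t}{2\log 2}$ dyadic blocks contributes a bounded constant $A_{2}/A_{1}$, so the upper range adds a further $\tfrac{A_{2}}{2A_{1}\log 2}\log t$.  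The leading coefficient is therefore $\tfrac12+\tfrac{A_{2}}{2A_{1}\log 2}$, which is pushed below $3/4$ by optimising the free parameters (in the paper, $k=14$, $m=6$, $A_{1}=23$, valid for $t\ge 10^{8}$) together with a higher-order Euler--Maclaurin tail; the remaining range $3\le t\le 10^{8}$ is then handled numerically.  Your final paragraph about numerical verification and constant-tracking is fine, but the analytic engine you describe cannot on its own produce any coefficient below $1$.
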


Good explicit bounds on $|\zeta(1+ it)|$ enable one to bound the zeta-function more effectively throughout the critical strip. Indeed Theorem \ref{onlyt} can be used to improve the estimate on $S(T)$ given in \cite{TrudS2}.
\section{Backlund's result}
To prove (\ref{Back}) consider $\sigma >1$ and $t>1$, and write $\zeta(s) - \sum_{n\leq N} n^{-s} = \sum_{N<n} n^{-s}$. Now invoke the following version of the Euler--Maclaurin summation formula --- this can be found in \cite[Thm 2.19]{Murty}.
\begin{Lem}\label{EML}
Let $k$ be a nonnegative integer and $f(x)$ be $(k+1)$ times differentiable on the interval $[a, b]$. Then
\begin{equation*}\label{Ems}
\begin{split}
\sum_{a<n\leq b} f(n) &= \int_{a}^{b} f(t)\, dt + \sum_{r=0}^{k} \frac{(-1)^{r+1}}{(r+1)!} \left\{f^{(r)}(b) - f^{(r)}(a)\right\} B_{r+1}\\
&+ \frac{(-1)^{k}}{(k+1)!} \int_{a}^{b} B_{k+1}(x) f^{(k+1)}(x)\, dx,
\end{split}
\end{equation*}
where $B_{j}(x)$ is the $j$th periodic Bernoulli polynomial and $B_{j} = B_{j}(0)$.
\end{Lem}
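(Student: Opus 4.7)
My approach is induction on $k$, using integration by parts and the identity $B_{r+1}'(x) = (r+1) B_r(x)$ to peel off successive terms. The role of the periodic Bernoulli polynomials (as opposed to the ordinary polynomials) is to permit integration by parts across interior integer points without boundary contributions, which is valid because $B_j$ is continuous at the integers for $j\ge 2$.

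For the base case $k=0$, I would work on a single unit interval $[n,n+1]$, on which $B_1(x) = x - n - \tfrac12$ and hence $B_1'(x) = 1$. Integration by parts gives
\[
\int_n^{n+1} f(x)\,dx = \tfrac12\bigl(f(n) + f(n+1)\bigr) - \int_n^{n+1} B_1(x) f'(x)\,dx,
\]
since $B_1((n{+}1)^{-}) = \tfrac12$ and $B_1(n^{+}) = -\tfrac12$. Summing over $n = a, a+1, \ldots, b-1$ (treating integer $a,b$ first) telescopes the pointwise averages into $\sum_{a<n\le b} f(n) - \tfrac12\bigl(f(b)-f(a)\bigr)$, yielding the $k=0$ identity after rearrangement. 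Because $B_1 = -\tfrac12$, the boundary term $\tfrac12(f(b)-f(a))$ equals $-B_1\{f(b)-f(a)\} = \frac{(-1)^{1}}{1!}B_1\{f(b)-f(a)\}$, matching the $r=0$ term.

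For the inductive step, assume the formula at level $k$. Only the remainder needs manipulation: using $B_{k+1}(x) = B_{k+2}'(x)/(k+2)$, integration by parts gives
\[
\int_a^b B_{k+1}(x) f^{(k+1)}(x)\,dx = \frac{1}{k+2}\bigl[B_{k+2}(x) f^{(k+1)}(x)\bigr]_a^b - \frac{1}{k+2}\int_a^b B_{k+2}(x) f^{(k+2)}(x)\,dx.
\]
Because $k+2\ge 2$, $B_{k+2}$ is continuous at every integer, so no hidden contributions arise at interior integers, and $B_{k+2}(a) = B_{k+2}(b) = B_{k+2}$ when $a,b$ are integers. Multiplying through by the $k$-th remainder's prefactor $(-1)^k/(k+1)!$, the boundary piece becomes exactly $\frac{(-1)^{k+2}}{(k+2)!}B_{k+2}\{f^{(k+1)}(b)-f^{(k+1)}(a)\}$, i.e.\ the new $r=k+1$ term, and the residual integral carries the required prefactor $(-1)^{k+1}/(k+2)!$.

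The principal obstacle is bookkeeping rather than conceptual: one must verify $B_j(0) = B_j(1)$ for $j\ge 2$ so that integration by parts absorbs interior integer ``jumps,'' and track the alternating signs and factorials carefully. For non-integer endpoints $a,b$, the base case also requires a small adjustment on the two fractional end pieces at $a$ and $b$; this is the only place where the endpoint arithmetic intervenes, and once $k=0$ is pinned down the inductive step is uniform in $a,b$.
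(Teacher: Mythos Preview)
Your argument is the standard and correct one: establish the $k=0$ identity by integrating by parts against $B_1$ on each unit interval and telescoping, then promote $k$ to $k+1$ by writing $B_{k+1}=B_{k+2}'/(k+2)$ and integrating by parts once more, using continuity of $B_{k+2}$ at integers for $k\ge 0$. The sign and factorial bookkeeping you record is accurate, and your observation that the non-integer endpoint case only perturbs the base step (with the induction uniform thereafter) is right, though in fact the formula as stated---with the \emph{constants} $B_{r+1}=B_{r+1}(0)$ rather than $B_{r+1}(\{a\}),B_{r+1}(\{b\})$---already presumes integer $a,b$, which is all the paper needs.

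As for comparison: the paper does not prove this lemma at all. It is quoted verbatim from Murty, \emph{Problems in Analytic Number Theory}, Theorem~2.19, and used as a black box. So your proposal supplies strictly more than the paper does here; there is no alternative argument in the paper to contrast with.
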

Apply this to $f(n) = n^{-s}$, with $k=1$, $a = N$ and with $b$ dispatched to infinity. Thus
\begin{equation}\label{frs}
\zeta(s) - \sum_{n\leq N-1} n^{-s}  = \frac{N^{1-s}}{s-1} + \frac{1}{2N^{s}} + \frac{s}{12N^{s+1}} - \frac{s(s+1)}{2}\int_{N}^{\infty} \frac{\{x\}^{2} - \{x\} + \frac{1}{6}}{x^{s+2}}\, dx,
\end{equation}
where, since the right-side converges for $\Re(s) > -1$, the equation remains valid when $s=1+ it$. Hence one can estimate the sum in (\ref{frs}) using
\begin{equation}\label{logs}
\sum_{n\leq N} \frac{1}{n} \leq \log N + \gamma + \frac{1}{N},
\end{equation}
which follows from partial summation, and in which $\gamma$ denotes Euler's constant. Now if $N = [t/m]$, where $m$ is a positive integer to be chosen later, (\ref{frs}) and (\ref{logs}) combine to show that
\begin{equation}\label{mlast}
|\zeta(1+ it)| - \log t\leq  - \log m + \gamma + \frac{1}{t} + \frac{m}{2(t-m)} + \frac{m^{2}(1+t)(4+t)}{24(t-m)^{2}}.
\end{equation}
The aim is to choose $m$ and $t_{0}$ such that $t\geq t_{0}$ guarantees the right-side of (\ref{mlast}) to be negative. It is easy to verify that when $m=3$, choosing $t= 49.385\ldots$ suffices. Thus (\ref{Back}) is true for all $t\geq 50$; a quick computation shows that (\ref{Back}) remains true for $t\geq 2.001\ldots$.

It seems impossible to improve upon (\ref{Back}) without a closer analysis of sums of the form $\sum_{a<n\leq 2a} n^{-it}$. Taking further terms in the Euler--Maclaurin expansion in (\ref{frs}) does not achieve an overall saving; choosing $N = [t^{\alpha}]$ for some $\alpha<1$ in (\ref{logs}) means that the integral in (\ref{frs}) is no longer bounded.

The next section aims at securing a good bound for $\sum_{a< n \leq 2a}n^{-it}$ for `large' values of $a$. For `small' values of $a$ one may estimate the sum trivially. The inherent optimism is that, when combined, these two estimates give an improvement on (\ref{Back}).

\section{Exponential sums: beyond Backlund}
The following is an explicit version of Theorem 5.9 in \cite{Titchmarsh}.
\begin{Lem}[Cheng and Graham]\label{CLem}
Assume that $f(x)$ is a real-valued function with two continuous derivatives when $x\in(a, 2a]$. If there exist two real numbers $V<W$ with $W>1$ such that
\begin{equation*}
\frac{1}{W}\leq |f''(x)|\leq \frac{1}{V}
\end{equation*}
for $x\in[a+1, 2a]$, then
\begin{equation*}
\bigg|\sum_{a<n\leq 2a} e^{2\pi f(n)}\bigg| \leq \frac{1}{5}\left(\frac{a}{V} + 1\right)(8W^{1/2} + 15).
\end{equation*}
\end{Lem}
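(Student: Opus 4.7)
The plan is to adapt van der Corput's method, splitting the sum according to the integer closest to $f'(n)$ and then applying an explicit Kusmin--Landau inequality piecewise. Without loss of generality assume $f''(x) > 0$ on $[a+1, 2a]$, so that $f'$ is strictly increasing; let $\alpha = f'(a+1)$ and $\beta = f'(2a)$. The hypothesis $|f''| \leq 1/V$ yields $\beta - \alpha \leq (a-1)/V$, so the range of $f'$ contains at most $a/V + 1$ integers.

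For each integer $\nu$ with $\alpha - \tfrac12 \leq \nu \leq \beta + \tfrac12$, let $I_\nu \subseteq [a+1, 2a]$ denote the preimage of $[\nu - \tfrac12, \nu + \tfrac12]$ under $f'$; then the full sum decomposes, up to an $O(1)$ contribution from the range $a < n \leq a+1$, into $\sum_{\nu} \sum_{n \in I_\nu} e^{2\pi i f(n)}$. On each $I_\nu$, set $g(x) = f(x) - \nu x$, so that $g'(x) = f'(x) - \nu$ lies in $[-\tfrac12, \tfrac12]$ while $e^{2\pi i g(n)} = e^{2\pi i f(n)}$.

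Now introduce a parameter $\eta \in (0, \tfrac12)$ and partition $I_\nu$ into the central sub-interval $I_\nu^{0} = \{x \in I_\nu : |g'(x)| \leq \eta\}$ and the two outer pieces $I_\nu^{\pm}$ on which $|g'(x)| \in [\eta, \tfrac12]$. On each of $I_\nu^{\pm}$, since $g'$ is monotonic and bounded away from every integer by $\eta$, an explicit Kusmin--Landau inequality gives $\bigl|\sum_{n \in I_\nu^{\pm}} e^{2\pi i f(n)}\bigr| \leq c/\eta$ for a small absolute constant $c$. On $I_\nu^{0}$ apply the trivial bound: because $f''(x) \geq 1/W$, the interval $I_\nu^{0}$ has length at most $2\eta W$, and so contributes at most $2\eta W + 1$. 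Balancing the two estimates by taking $\eta$ of order $W^{-1/2}$ produces a contribution of $O(W^{1/2})$ per integer $\nu$.

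Summing over the at most $a/V + 1$ admissible values of $\nu$ then yields the advertised shape $O\bigl((a/V+1)\,W^{1/2}\bigr)$. The main obstacle is purely the bookkeeping of the explicit constants: one must feed a sharp form of Kusmin--Landau into the three-piece decomposition, optimise $\eta$ in its exact (not merely asymptotic) form, and absorb the $O(1)$ losses arising at the endpoints of each $I_\nu$ and at the endpoints $a$ and $2a$ into the clean shape $\tfrac{1}{5}(8W^{1/2}+15)$. The $8$ multiplying $W^{1/2}$ emerges from the balanced choice of $\eta$, while the additive $15$ collects all remaining $O(1)$ losses; verifying that the $+15$ suffices uniformly in the data is the one calculation that cannot really be sidestepped.
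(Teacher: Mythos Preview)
The paper does not actually prove this lemma: its entire proof reads ``See Lemma~3 in \cite{Cheng}.'' What you have written is a correct outline of the van~der~Corput argument that Cheng and Graham themselves carry out, so in that sense your approach matches the underlying source rather than diverging from it.

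That said, your proposal stops short of being a proof of the stated lemma. The whole point of the Cheng--Graham result is the explicit pair of constants $8$ and $15$, and you explicitly defer that computation (``verifying that the $+15$ suffices uniformly in the data is the one calculation that cannot really be sidestepped''). A referee would not accept this as a replacement for the citation: you would need to quote a sharp Kusmin--Landau constant, optimise $\eta$ exactly, and track the endpoint losses. There is also a small slip in the counting: the integers $\nu$ with $I_\nu\neq\emptyset$ lie in $[\alpha-\tfrac12,\beta+\tfrac12]$, an interval of length $\beta-\alpha+1$, so the na\"{\i}ve count gives up to $\beta-\alpha+2\le a/V+2$ values of $\nu$, not $a/V+1$; recovering the cleaner $a/V+1$ requires a slightly more careful argument (or absorbing the extra $+1$ into the additive constant). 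None of this is a conceptual gap --- your strategy is the right one --- but as written it is a sketch of Cheng--Graham's proof rather than a proof.
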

\begin{proof}
See Lemma 3 in \cite{Cheng}.
\end{proof}
Applying Lemma \ref{CLem} to $f(x) = -(2\pi)^{-1} t \log x$ gives
\begin{equation}\label{preA1}
\bigg|\sum_{a<n\leq 2a} n^{-it}\bigg| \leq t^{1/2}\left\{ \frac{8}{5} \sqrt{\frac{2}{\pi}} + \frac{16\sqrt{2\pi}a}{5t} + \frac{3t^{1/2}}{2\pi a} + 3t^{-1/2}\right\},
\end{equation}
subject\footnote{This is to ensure that, in Lemma \ref{CLem}, $W>1$ --- see (\ref{footy}).} to $8\pi a^{2} >t$. Now take\footnote{To ensure that this is a non-empty interval see (\ref{footy}).} $A_{1} t^{1/2} < a \leq [t/m]$ for some constant $A_{1}$ and positive integer $m$ to be determined later. If $t\geq t_{0}$ then (\ref{preA1}) shows that
\begin{equation*}\label{A2def}
\bigg|\sum_{a<n\leq 2a} n^{-it}\bigg| \leq A_{2} t^{1/2},
\end{equation*}
and hence, by partial summation,
\begin{equation}\label{Longs}
\bigg|\sum_{a<n\leq 2a} n^{-1-it}\bigg| \leq A_{2} a^{-1} t^{1/2} \leq \frac{A_{2}}{A_{1}},
\end{equation}
where
\begin{equation*}
A_{2} = \frac{8}{5} \sqrt{\frac{2}{\pi}} + \frac{16\sqrt{2\pi}}{5m} + \frac{3}{2\pi A_{1}} + 3t_{0}^{-1/2}.
\end{equation*}
One may now apply (\ref{Longs}) to each of the sums on the right-side of 
\begin{equation*}
\bigg|\sum_{A_{1} t^{1/2} < n \leq (t/m)}\frac{1}{n^{1+ it}}\bigg| = \sum_{\frac{1}{2}(t/m) < n \leq (t/m)} + \sum_{\frac{1}{4}(t/m) < n \leq \frac{1}{2}(t/m)} + \cdots.
\end{equation*}
There are at most
\begin{equation}\label{nos}
\frac{\frac{1}{2} \log t - \log (mA_{1}) + \log 2}{\log 2}
\end{equation}
such sums.
This gives an upper bound for $\sum n^{-1-it}$ when $n> A_{1}t^{1/2}$. When $n \leq A_{1}t^{1/2}$ one may use (\ref{logs}) to estimate the sum trivially. 

\section{Proof of Theorem \ref{onlyt}}
In $\zeta(s) - \sum_{n\leq N} n^{-s} = \sum_{N<n} n^{-s}$ use Euler--Maclaurin summation (Lemma \ref{EML}) to $k$ terms. Choosing $N-1 = [t/m]$, recalling (\ref{Longs}) and (\ref{nos}), and estimating all complex terms trivially gives
\begin{equation}\label{Emss}
\begin{split}
|\zeta(1+ it)| &\leq  \log t\left\{ \frac{1}{2} + \frac{A_{2}}{2 A_{1} \log 2}\right\} + \frac{A_{2}\{\log 2 - \log(mA_{1})\}}{A_{1} \log 2} + \log A_{1} + \gamma \\
&+ \frac{1}{A_{1} t_{0}^{1/2}} +  \frac{m}{2t} + \frac{1}{t} + \sum_{r=1}^{k} \frac{|B_{r+1}|}{(r+1)!} (1+t)\cdots(r+t)\left(\frac{m}{t}\right)^{r+1}\\
&+ \frac{(1+ t)\cdots(k+1+t)}{(k+1)\cdot(k+1)!} \max |B_{k+1}(x)| \left(\frac{m}{t}\right)^{k+1}.
\end{split}
\end{equation}
Note that each term in the $r$-sum in (\ref{Emss}) is $O_{m, k}(t^{-1})$. This is cheap relative to the last term which is $O_{m, k}(1)$. Thus one can take $k$ somewhat large to reduce the burden of the final term. For a given $t_{0}$, when $t\geq t_{0}$ one can optimise (\ref{Emss}) over $k$, $m$ and $A_{1}$ subject to
\begin{equation}\label{footy}
A_{1} > \frac{1}{\sqrt{8\pi}}, \quad m A_{1}\leq t_{0}^{1/2}.
\end{equation}
One finds that, when $k=14, m=6, A_{1} = 23$ then $|\zeta(1+ it)|\leq 0.749818\ldots$, for all $t\geq 10^{8}$.  A numerical check on \textit{Mathematica} suffices to extend the result to all $t\geq 2.391\ldots$, whence Theorem \ref{onlyt} follows.

\subsection{Improvements}
Lemma \ref{CLem} is unable to furnish a value less than $\frac{1}{2}$ in Theorem \ref{onlyt}. On the other hand, by verifying that $|\zeta(1+ it)|< \frac{1}{2} \log t$ for $t$ larger than $10^{8}$ one will improve slightly on Theorem \ref{onlyt}.

One could also take an analogue of Lemma \ref{CLem} that incorporates higher derivatives. Such a result, giving explicit bounds on exponential sums of a function involving $k$ derivatives, is given in \cite[Prop.\ 8.2]{GranvilleRamare}. It is unclear how much could be gained from pursuing this idea.
\bibliographystyle{plain}
\bibliography{themastercanada}

\begin{thebibliography}{1}

\bibitem{Backlund}
R.~J. Backlund.
\newblock Sur les z\'{e}ros de la fonction $\zeta(s)$ de {R}iemann.
\newblock {\em Comptes rendus de l'Acad\'{e}mie des sciences}, 158:1979--1982,
  1914.

\bibitem{Cheng}
Y.~F. Cheng and S.~W. Graham.
\newblock Explicit estimates for the {R}iemann zeta function.
\newblock {\em Rocky Mountain Journal of Mathematics}, 34(4):1261--1280, 2004.

\bibitem{Ford}
Kevin Ford.
\newblock Vinogradov's {I}ntegral and {B}ounds for the {R}iemann {Z}eta
  {F}unction.
\newblock {\em Proc. London Math. Soc.}, 85(3):565--633, 2002.

\bibitem{GranvilleRamare}
A.~Granville and O.~Ramar\'{e}.
\newblock Explicit bounds on exponential sums and the scarcity of squarefree
  binomial coefficients.
\newblock {\em Mathematika}, 43(1):73--197, 1996.

\bibitem{Mellin}
Hj. Mellin.
\newblock Eine {F}ormel f\"{u}r den {L}ogarithmus transcendenter {F}unktionen
  von endlichem {G}eschlecht.
\newblock {\em Acta Soc. Sc. Fenn.}, 24(4):1--50, 1902.

\bibitem{Murty}
M.~Ram Murty.
\newblock {\em Problems in Analytic Number Theory}, volume 206 of {\em Graduate
  Texts in Mathematics}.
\newblock Springer, 2008.

\bibitem{Titchmarsh}
E.~C. Titchmarsh.
\newblock {\em The Theory of the Riemann zeta-function}.
\newblock Oxford Science Publications. Oxford University Press, Oxford, 2nd
  edition, 1986.

\bibitem{TrudS2}
T.~S. Trudgian.
\newblock An improved upper bound for the argument of the {R}iemann
  zeta-function on the critical line {II}.
\newblock {\em preprint}.

\end{thebibliography}

\end{document}